\theoremstyle{plain}
\newtheorem{theorem}{Theorem}[section]
\newtheorem{lemma}[theorem]{Lemma}
\newtheorem{proposition}[theorem]{Proposition}
\theoremstyle{definition}
\theoremstyle{remark}
\newtheorem{remark}{Remark}
\def\ps@pprintTitle{ 
	\let\@oddhead\@empty
	\let\@evenhead\@empty
	\def\@oddfoot{\footnotesize\itshape
		\ifx\@empty\@empty
		\else\@journal\fi\hfill\today}%
	\let\@evenfoot\@oddfoot
}
\newcommand{\kws}{\bigskip\par\addvspace\medskipamount{\rightskip=0pt plus1cm}{\noindent \bfseries Keywords: \enspace}}
\newcommand{\MSC}{\bigskip\par\addvspace\medskipamount{\rightskip=0pt plus1cm}{\noindent \bfseries Mathematics Subject Classification (2000): \enspace}}
\numberwithin{equation}{section}
\numberwithin{theorem}{section}
\newcommand{\R}{\mathbb R}
\newcommand{\N}{\mathbb N}
\newcommand{\Id}{\boldsymbol{\mathrm{Id}}}
\title{Reinforced dynamics for interacting agents in competitive or cooperative environments} 
\author[1]{	Michele~Aleandri\thanks{Email: \textit{maleandri@luiss.it}}}
\author[2]{Paolo~Dai Pra\thanks{Email: \textit{paolo.daipra@univr.it}}}
\author[3]{Ida Germana~Minelli\thanks{Email: \textit{idagermana.minelli@univaq.it}}}
\affil[1]{{\small LUISS University, DIASDD, 00197 viale Romania 32, Roma,  Italy.}}
\affil[2]{{\small Universit\`a di Verona, 37134 Strada le Grazie 15, Verona, Italy.}}
\affil[3]{{\small Universit\`a dell'Aquila, DISIM, 
		67100 Coppito, L'Aquila, Italy.}}
\date{}
\begin{document}
	
	\maketitle

	\begin{abstract}We study systems of interacting reinforced stochastic processes, where agents’ decisions evolve under reinforcement, network-mediated interactions, and environmental influences. In competitive environments with irreducible networks, we prove almost sure convergence of agents’ states, with bipartite graphs leading to non-deterministic limits and non-bipartite graphs to deterministic ones. These results are extended to reducible networks with cooperative or competitive substructures, using a hierarchical graph representation in which higher-level agents perceive lower-level agents as forcing inputs. A general framework for interacting agents under external forcing is also provided, offering a rigorous characterization of long-term behavior and synchronization phenomena in such systems.
	\end{abstract}

	\kws Interacting systems; Reinforced stochastic processes; Network-based dynamics; Urn models; Opinion dynamics. 
	
	\MSC 60F15; 60K35; 62L20; 91D30. 

	\section{Introduction}
		Interacting agents make decisions and form preferences or beliefs under the influence of others through various forms of social interaction. For example, opinions are often shaped by social influence, leading individuals to either conform to or diverge from the majority. A particularly interesting mechanism in these dynamics is reinforcement, where the probability of an action or event occurring increases the more frequently it has occurred in the past. In the context of opinion dynamics, self-reinforcement can be interpreted as a process whereby an agent’s personal inclination is strengthened in the direction of a previously expressed choice or action. Alongside this force, the interaction is described by the weighted network that gives a measure of how strong the link between two agents is. Moreover, these interactions can be modulated by the agents’ environment: in cooperative settings, an agent tends to align with their neighbors’ opinions, whereas in competitive settings, an agent tends to act in opposition.
		The interplay between reinforcement and interaction has been the leading subject of a wide literature \cite{aletti2017synchronization,Aletti_Crimaldi_Ghiglietti_2019,Aletti_Crimaldi_Ghiglietti_2020,Aletti_Ghiglietti_2017,aletti2024networksProb,aletti2024networks,Benaim_Benjamini_Chen_Lima_2015,Chen_Lucas_2014,Crimaldi_DaiPra_Louis_Minelli_2019,Crimaldi2016,Crimaldi2023,dai2014synchronization,Lima2016,Louis2018,mirebrahimi2024synchronization,Paganoni2004,Sahasrabudhe2016}. While the case of cooperative interaction is well understood, the competitive case has never been treated in details. We remark that, in absence of reinforcement, competitive interaction has been widely treated in the Statistical Mechanics modeling of antiferromagnets (see e.g \cite{binek2003ising,wu1989critical}) and it has been applied in social science settings to describe nonconformist behavior \cite{AleMin19Lotka,AleMin21delay,touboul2019hipster}.
		
		In this paper we analyse the long-term behavior of a system of competitive interacting agents subject to reinforcement.
		In more detail, in Section \ref{sec:irred} we prove the almost sure convergence of the agents’ states when the interaction network is represented by an irreducible graph and the agents are in a competitive environment. Notably, two possible scenarios arise depending on whether the graph is bipartite (leading to a non-deterministic limit) or not (leading to a deterministic limit). In Section \ref{sec:reducble_graph}, we extend the convergence result to reducible graphs, where the environment—restricted to agents of the same family—can be either competitive or cooperative. This result is established using a hierarchical representation of the graph, in which agents are divided into classes of different levels, and higher-level agents perceive lower-level agents as a forcing input. A general treatment of interacting agents with forcing inputs is presented in Section \ref{sec:competitiveforce}.

		\section{Convergence and synchronization for irreducible graphs}\label{sec:irred}
		
		We consider the discrete-time dynamics of $N$ interacting agents whose states $Z_n(i) \in [0,1]$, $n \geq 0$, $i \in \{1,2, \ldots,N\}$ and driving noise $Y_n(i) \in \{0,1\}$ evolve as follows:
		\begin{align} \label{equation1}
			&Z_{n+1}(i)=(1-r_n)Z_n(i) + r_n Y_{n+1}(i),\ \ \ &i=1,\ldots, N,\\
			&P\left(Y_{n+1}(i)=1|\mathcal{F}_n\right) = \alpha_{ii}Z_n(i)+ \sum_{j\neq i}\alpha_{ij} \left(1-Z_n(j)\right),\ \ \  &i=1,\ldots, N,\nonumber
		\end{align} 
		where $\mathcal{F}_n=\sigma(Y_k; \ k\leq n)$ and
		\begin{itemize}
			\item 
			$(r_n)_{n \geq 0}$ is a sequence in $(0,1)$ satisfying the conditions
			\[
			\sum_{n=0}^{+\infty} r_n = +\infty, \quad \ \ \sum_{n=0}^{+\infty} r^2_n < +\infty.
			\]
			\item
			$\mathbf{A}=(\alpha_{i j})_{ i,j=1,\ldots, N}$ is a stochastic matrix tuning the interaction between agents. Note that the term $\alpha_{ij}(1-Z_n(j))$ 
			describes a {\em competitive} interaction: the smaller the value of $Z_n(j)$ the higher its contribution in the probability of increasing the value of $Z_n(i)$. 
			
		\end{itemize}
		Dynamics of this form emerge naturally in {\em urn models}. For instance, consider $N$ urns with red and white balls. At each time step a ball is randomly drawn from each urn, then reinserted, and a further ball is added according to the following rule: the ball added in urn $i$ is with probability $\alpha_{ii}$ of the same color of that drawn from urn $i$, with probability $\alpha_{ij}$, $j \neq i$, of the color different from that drawn form urn $j$. If we denote by $Z_n(i)\in [0,1]$ and $Y_n(i)\in
		\{0, 1\}$  respectively the random variables representing the fraction of red balls in urn $i$ at time $n$ and the indicator function of the event \emph{"the ball added in urn $i$ at time $n$ is red"}, their evolution is given by \eqref{equation1} with $r_n = \frac{1}{m+n+1}$, where $m$ is the initial number of balls in the urn.

		Going back to the general dynamics \eqref{equation1}, 
		we can write the evolution of the variables $Z_n(i)$ in vector form as 
		\begin{equation}\label{equation2}
			\mathbf{Z}_{n+1}=\mathbf{Z}_n+ r_n (\mathbf{K}\mathbf{Z}_n+\mathbf{c})+ r_n \mathbf{\Delta M}_{n+1},
		\end{equation}
		where $\mathbf{Z}_n=(Z_n(i))_{i=1}^N,\ \mathbf{Y}_{n}=(Y_{n}(i))_{i=1}^N$, $\mathbf{K}=(k_{ij})_{i,j=1}^N$ with $k_{ii}=\alpha_{ii}-1$ for $i=1.\ldots, N$ and $k_{i, j}=-\alpha_{ij}$ for $i,j=1,\ldots, N,\ i\neq j$,\  
		$ \mathbf{\Delta M}_{n+1}=(\Delta M_{n+1}(i))_{i=1}^N$ with $\Delta M_{n+1}(i)= Y_{n+1}(i)-E[Y_{n+1}(i)|\mathcal{F}_n]$, and $\mathbf{c}=(c_i)_{i=1}^N$ with $c_i=1-\alpha_{ii}$ for $i=1,\ldots, N$. \\

		In what follows, for a given $N$-dimensional matrix $\mathbf{A}=(\alpha_{ij})_{i,j=1}^N$ we shall denote by $\mathrm{diag}(\mathbf{A})=(d_{ij})_{i,j=1}^N$ the diagonal matrix with $d_{ii}=\alpha_{ii}$, for $i=1,\ldots, N$. We use the notation $\mathbf{I}_N$ for the $N$-dimensional identity matrix. Then, the matrix $\mathbf{K}$ in equation \eqref{equation2} can be written as $\mathbf{K}=2\mathrm{diag}(\mathbf{A})-\mathbf{I}_N-\mathbf{A}$.

		We can associate a weighted oriented graph $G=(V,E, W)$ to the system of agents, where the vertices represent the agents, we have an oriented edge $(i,j)\in E$ if and only if $i\neq j$ and $\alpha_{i, j}\neq 0$ and $W(i,j)=\alpha_{ij}$, for any $(i,j)\in E$. \\
		We say that agent $i$ is \emph{accessible} by agent $j$ if either $(i,j)\in E$ or there exists an oriented path from $i$ to $j$, i.e., if there exists $n$ and a sequence $j_1,\ldots, j_n\in \{1,\ldots N\}\setminus\{i, j\}$ of distinct vertices such that 
		$\alpha_{ij_1}\alpha_{j_1j_2}\cdot\ldots\cdot\alpha_{j_nj}>0$. Two agents $i$ and $j$ \emph{communicate} if $i$ is accessible by $j$ and $j$ is accessible by $i$. We say that $G$ is \emph{irreducible} if every pair of agents communicate. 
		
		The main result of this section is the following 
		
		\begin{theorem}[Convergence and Synchronization]\label{convergence}
			If the graph $G$ is irreducible, then $(\mathbf{Z}_n)_n$ converges almost surely when $n\to \infty$ to a vector $\mathbf{Z}_\infty=(Z_\infty(i))_{i=1}^N$. 
			When the graph $G$ is bipartite, 
			letting $I, J$ be the sets of vertices of the partition, the limit $\mathbf{Z}_\infty$ has the form $Z_\infty(i)=Z_\infty$ for $i\in I$ and $Z_{\infty}(i)=1-Z_\infty$ for $i\in J$, where $P(Z_\infty=x)<1$ for every $x\in [0,1]$. When the graph is not bipartite the limit is deterministic and is given by $\frac{1}{2}\mathbf{1}$, where $\mathbf{1}$ denotes the $N$-dimensional vector of ones.
		\end{theorem}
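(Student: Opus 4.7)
The plan is to recenter by $\mathbf{W}_n := \mathbf{Z}_n - \tfrac12\mathbf{1}$; using $\mathbf{A}\mathbf{1}=\mathbf{1}$, recursion \eqref{equation2} rewrites as
\[
\mathbf{W}_{n+1} = \mathbf{W}_n + r_n\mathbf{K}\mathbf{W}_n + r_n\,\mathbf{\Delta M}_{n+1},\qquad \mathbf{K}=\mathbf{B}-\mathbf{I}_N,\ \mathbf{B}:=2\,\mathrm{diag}(\mathbf{A})-\mathbf{A},
\]
a linear stochastic approximation around $\mathbf{W}=0$. The key preliminary is a spectral dichotomy for $\mathbf{B}$: since $|\mathbf{B}|=\mathbf{A}$ entrywise and $\mathbf{A}$ is irreducible and row-stochastic, Wielandt's sharpening of Perron--Frobenius gives $\rho(\mathbf{B})\le 1$, with equality iff there is a $\pm1$-valued diagonal $D$ satisfying $\mathbf{B}=D\mathbf{A}D$; comparing off-diagonal entries, this relation forces $D_{ii}D_{jj}=-1$ on every edge of $G$, and therefore admits a solution iff $G$ is bipartite, with $D_{ii}=+1$ on one part $I$ and $D_{ii}=-1$ on the other part $J$.

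In the non-bipartite case $\rho(\mathbf{B})<1$, so $\mathbf{K}$ is Hurwitz. I would let $P$ be the unique symmetric positive definite solution of the Lyapunov equation $\mathbf{K}^\top P+P\mathbf{K}=-\mathbf{I}_N$ and set $V(w)=w^\top P w$. A direct expansion of $V(\mathbf{W}_{n+1})$, the martingale property of $\mathbf{\Delta M}_{n+1}$ and the a.s.\ bound $|\mathbf{\Delta M}_{n+1}|\le C$ yield
\[
E\bigl[V(\mathbf{W}_{n+1})\mid\mathcal{F}_n\bigr]\le (1-c\,r_n)\,V(\mathbf{W}_n)+C'r_n^2
\]
for some $c,C'>0$; the Robbins--Siegmund lemma together with $\sum r_n=\infty$ and $\sum r_n^2<\infty$ gives $V(\mathbf{W}_n)\to 0$ almost surely, whence $\mathbf{Z}_n\to\tfrac12\mathbf{1}$.

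In the bipartite case the similarity $\mathbf{B}=D\mathbf{A}D$ gives $\mathbf{B}$ the same spectrum as $\mathbf{A}$, so $\mathbf{K}$ has $0$ as a simple eigenvalue with right eigenvector $D\mathbf{1}$ and left eigenvector $D\mu$ (where $\mu$ is the Perron left eigenvector of $\mathbf{A}$), all other eigenvalues having strictly negative real part. I would split $\mathbf{W}_n=\lambda_n D\mathbf{1}+\mathbf{W}_n^\perp$ along this spectral decomposition, with $\lambda_n=(D\mu)^\top\mathbf{W}_n$ and $\mathbf{W}_n^\perp$ in the complementary $\mathbf{K}$-invariant subspace. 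Because $(D\mu)^\top\mathbf{K}=0$, $\lambda_n$ is a bounded $\mathcal{F}_n$-martingale and converges a.s.\ to some $\lambda_\infty$; since $\mathbf{K}$ restricted to the complementary subspace is Hurwitz, the Lyapunov argument of the previous paragraph applies verbatim to $\mathbf{W}_n^\perp$ and gives $\mathbf{W}_n^\perp\to 0$ almost surely. Translating back, $Z_\infty(i)=\tfrac12+\lambda_\infty$ on $I$ and $Z_\infty(i)=\tfrac12-\lambda_\infty$ on $J$, which is the claimed form.

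The non-degeneracy $P(Z_\infty=x)<1$ I would obtain by contradiction: if $\lambda_\infty$ were a.s.\ constant, orthogonality of martingale differences would force $E\bigl[((D\mu)^\top\mathbf{\Delta M}_{n+1})^2\bigr]=0$ for every $n$; under the natural conditional independence of the $Y_{n+1}(i)$'s given $\mathcal{F}_n$ (built into the urn interpretation), this amounts to $p_i(1-p_i)=0$ for every $i$ and every $n$, where $p_i=\alpha_{ii}Z_n(i)+\sum_{j\ne i}\alpha_{ij}(1-Z_n(j))$. But once $\mathbf{Z}_n\in(0,1)^N$, $p_i$ is a strict convex combination of numbers in $(0,1)$ and hence lies in $(0,1)$; and an inspection of \eqref{equation1} shows that the cube $(0,1)^N$ is forward-invariant for the recursion, giving a contradiction for any interior initial condition. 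In my view, the principal technical obstacle is precisely this final step: the convergence parts are now-standard once the spectral picture of $\mathbf{B}$ is in place, while ruling out a deterministic limit requires combining the stochastic approximation with a careful analysis of the boundary behaviour of $\mathbf{Z}_n$ in $[0,1]^N$.
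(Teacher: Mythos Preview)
Your spectral dichotomy via Wielandt is misstated, and this is a genuine gap. Wielandt's equality case yields $\mathbf{B}=e^{i\phi}D\mathbf{A}D^{-1}$ for some phase $\phi$ and a \emph{complex} unit-modulus diagonal $D$, not $\mathbf{B}=D\mathbf{A}D$ with $D$ real $\pm1$-valued. The directed $3$-cycle is a counterexample: with $\mathbf{A}$ the cyclic permutation matrix one has $\mathrm{diag}(\mathbf{A})=0$, so $\mathbf{B}=-\mathbf{A}$ and $\mathrm{Spec}(\mathbf{B})=\{-1,-\omega,-\omega^{2}\}$ with $\omega=e^{2\pi i/3}$; thus $\rho(\mathbf{B})=1$ although the $3$-cycle is not bipartite. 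The dichotomy you actually need is ``$1\in\mathrm{Spec}(\mathbf{B})$ (equivalently $\mathbf{K}$ singular) iff $G$ is bipartite'', which is precisely Proposition~\ref{matrixK} of the paper. With that in hand, Ger\v{s}gorin applied to $\mathbf{B}$ places every eigenvalue in the closed unit disc, so $\mathrm{Re}(\mu)\le 1$ with equality only at $\mu=1$; hence in the non-bipartite case $\mathbf{K}=\mathbf{B}-\mathbf{I}_N$ is Hurwitz and your Lyapunov/Robbins--Siegmund argument goes through unchanged. In the bipartite case the identity $\mathbf{B}=D\mathbf{A}D$ with $D$ the $\pm1$ bipartition matrix does hold by direct verification, so your spectral splitting and martingale argument survive there.

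With this correction your route is valid but different from the paper's. The paper builds no Lyapunov function: in the non-bipartite case it combines Proposition~\ref{matrixK} with Ger\v{s}gorin (Theorem~\ref{gervsgorin}) to see that $\mathbf{K}$ is Hurwitz and then invokes a black-box stochastic-approximation result (Theorem~\ref{benaim}) to conclude $\mathbf{Z}_n\to\tfrac12\mathbf{1}$. In the bipartite case the paper performs the change of variables $\tilde Z_n(i)=Z_n(i)$ on $I$ and $\tilde Z_n(i)=1-Z_n(i)$ on $J$, checks that $(\tilde{\mathbf{Z}}_n)_n$ obeys the \emph{cooperative} recursion, and quotes the known synchronization and non-degeneracy result for that model (Theorem~\ref{convergenceCoop}). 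That substitution is exactly your conjugation by $D$, written at the level of the process rather than the matrix; the paper's argument is shorter because it outsources both convergence and non-degeneracy to the cooperative literature, whereas yours is more self-contained. Note finally that your non-degeneracy step imposes conditional independence of the $Y_{n+1}(i)$ given $\mathcal{F}_n$ and an interior initial condition, neither of which is among the stated hypotheses; the paper sidesteps this issue by appealing to Theorem~\ref{convergenceCoop}.
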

		
		\subsection{Preliminary results}
		
		In order to prove Theorem \ref{convergence} we need some preliminary results. 
		\begin{lemma}\label{Lemma1}
			Let $(X_n)_n$ be an irreducible Markov chain with values in $V=\{1,2,\ldots, N\}$ and  transition matrix $\mathbf{Q}$. Assume that $q_{ii} = 0$ for all $i \in V$. Let $G$ be the weighted oriented graph associated to $\mathbf{Q}$. Then the chain has even period if and only if the graph $G$ is bipartite. 
		\end{lemma}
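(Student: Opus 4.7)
The plan is to reduce the claim to the classical fact that the period of an irreducible Markov chain is $d = \gcd\{n \geq 1 : q_{ii}^{(n)} > 0\}$ (independent of $i \in V$), and to note that $q_{ii}^{(n)} > 0$ if and only if there is a directed closed walk of length $n$ at $i$ in the oriented graph $G$. Thus ``even period'' translates exactly to ``every directed closed walk in $G$ has even length,'' and the statement reduces to the combinatorial equivalence between this property and bipartiteness. The hypothesis $q_{ii}=0$ just excludes self-loops (closed walks of length $1$), so cycles have length $\geq 2$.

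For the easy direction, suppose $V = I \sqcup J$ is the bipartition, so every directed edge of $G$ goes between $I$ and $J$. Any directed walk alternates between the two classes at each step, so a walk returning to its starting vertex must have even length. Hence every integer $n$ with $q_{ii}^{(n)} > 0$ is even, which gives $2 \mid d$.

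For the converse, I would construct the bipartition from scratch. Fix $i_0 \in V$ and set
\[
I = \{j \in V : \text{there exists a directed walk } i_0 \to j \text{ of even length}\}, \qquad J = V \setminus I.
\]
Irreducibility gives $I \cup J = V$, and I claim $I$ and $J$ are disjoint by construction; the delicate point is that $I$ cannot also contain a vertex reachable only by walks of odd length, since if $j$ were reachable from $i_0$ by walks of both parities, irreducibility would furnish a walk $j \to i_0$ of some length $k$, producing two closed walks at $i_0$ of lengths differing by an odd number, contradicting $2 \mid d$. This same ``combine with a return walk'' argument, applied to an edge $(u,v) \in E$ with $u,v$ in the same class, immediately produces a closed walk at $i_0$ of odd length, contradicting even period. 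Hence every edge goes between $I$ and $J$, so $G$ is bipartite.

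The main subtlety—really the only one—is dealing with the oriented nature of $G$: bipartiteness of a directed graph must be interpreted as the existence of a vertex partition such that every directed edge crosses it, and the argument in the converse direction must produce a \emph{directed} return walk, which is exactly what irreducibility supplies. Beyond that, the proof is a standard parity argument and should be short.
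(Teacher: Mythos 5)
Your proof is correct and follows essentially the same route as the paper: the forward direction is the same parity-of-walks observation, and for the converse you partition the vertices by the parity of walks from a fixed base vertex and use irreducibility to supply a directed return walk, deriving an odd closed walk (hence an odd period) from any edge inside a class --- exactly the paper's argument with $V_1,V_2$ defined by odd/even reachability from state $1$. The only cosmetic difference is that you take $J=V\setminus I$ so disjointness is automatic and all the work sits in the edge-crossing step, whereas the paper defines both parity classes and checks disjointness separately.
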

		\begin{proof}
			Assume $G$ is bipartite: so $V = V_1 \cup V_2$, where $V_1$ and $V_2$ are nonempty, $V_1 \cap V_2 = \emptyset$, and $q_{ij}>0$ implies that either $i \in V_1$ and $j \in V_2$ or $i \in V_2$ and $j \in V_1$. Indicating with $q_{ij}(n)$ the elements of the $n$-steps transition probability, it follows immediately that $q_{ii}(n) > 0$ implies that $n$ is even, so the chain has even period. 
			
			Conversely, suppose the chain has even period.
			Denote by $\N^*$ and $2\N^+$ respectively the sets of positive odd and even integers.
			Consider the sets $V_1=\{i\in V : \exists n\in \N^* \mbox{ with } q_{1i}(n)>0\}$ and 
			$V_2=\{i\in V: \exists n\in 2\N^+ \mbox{ with } q_{1i}(n)>0\}$, which are the sets of states reachable from 1 in an even (respectively, odd) number of transitions. Since the chain is irreducible, we have $V_1\cup V_2=V$. Suppose now $i \in V_1$ and $q_{ij}>0$. Then $j$ is reachable from $1$ in an even number of steps, so $j \in V_2$. Similarly, if $i \in V_1$ and $q_{ij}>0$ one shows that $j \in V_1$. To complete the proof that $G$ is bipartite, we need to show that $V_1 \cap V_2 = \emptyset$. Suppose $i \in V_1 \cap V_2$; then there exist $n \in \N^*$ and $m \in 2\N^+$ such that $q_{1i}(n)>0$ and $q_{1i}(m) >0$. Since the chain is irreducible, there exists $h \in \N$ with $q_{i1}(h)>0$, so that $q_{11}(n+h) \geq  q_{1i}(n)q_{i1}(h) >0$ and, similarly $q_{11}(m+h)>0$. Since either $n+h$ or $m+h$ is odd, this contraddicts the assumption that the period is even.
		\end{proof}      
		
		\begin{proposition}\label{matrixK}
			Suppose that the graph $G$ is irreducible, then the matrix $\mathbf{K}$ in equation \eqref{equation2} is invertible if and only if the graph $G$ is not bipartite. 
		\end{proposition}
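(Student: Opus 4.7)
The plan is to reduce the question to whether $-1$ is an eigenvalue of an auxiliary irreducible stochastic matrix with zero diagonal, and then invoke Lemma~\ref{Lemma1} together with the Perron--Frobenius characterization of the peripheral spectrum. The first step is to observe that irreducibility of $G$ (with $N\ge 2$) rules out $\alpha_{ii}=1$ for any $i$, since that would leave vertex $i$ without outgoing edges. Consequently the matrix $\mathbf{P}=(p_{ij})$ defined by $p_{ii}=0$ and $p_{ij}=\alpha_{ij}/(1-\alpha_{ii})$ for $j\ne i$ is well-defined, stochastic, has the same off-diagonal support as $\mathbf{A}$, and is irreducible with associated graph $G$.

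Next, using $k_{ii}=\alpha_{ii}-1=-\sum_{j\ne i}\alpha_{ij}$ and $k_{ij}=-\alpha_{ij}$ for $j\ne i$, I would rewrite the equation $\mathbf{K}\mathbf{v}=0$ row by row as $\sum_{j\ne i}\alpha_{ij}(v_i+v_j)=0$, which after dividing by $1-\alpha_{ii}$ is exactly $\mathbf{P}\mathbf{v}=-\mathbf{v}$. Hence $\mathbf{K}$ is singular if and only if $-1\in\sigma(\mathbf{P})$. At this point the classical Perron--Frobenius fact that the unit-modulus eigenvalues of an irreducible stochastic matrix of period $d$ are exactly the $d$-th roots of unity (each simple) tells me that $-1\in\sigma(\mathbf{P})$ if and only if the period of $\mathbf{P}$ is even; and Lemma~\ref{Lemma1} (applied to $\mathbf{P}$, whose diagonal is zero by construction) says this happens precisely when $G$ is bipartite. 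Chaining these equivalences yields the claim.

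The only step that is not routine is the algebraic manipulation turning the $\mathbf{K}$-kernel equation into the $-1$-eigenvector equation for $\mathbf{P}$; the rest leans on standard spectral theory for irreducible nonnegative matrices and on the combinatorial content already provided by Lemma~\ref{Lemma1}. As a sanity check on the ``bipartite $\Rightarrow$ singular'' direction, if $V=V_1\cup V_2$ is a bipartition then the vector $\mathbf{v}$ taking values $+1$ on $V_1$ and $-1$ on $V_2$ satisfies $v_i+v_j=0$ for every edge $(i,j)\in E$ and hence lies in the kernel of $\mathbf{K}$ directly, bypassing any spectral argument.
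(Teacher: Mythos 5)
Your proof is correct and follows essentially the same route as the paper: your matrix $\mathbf{P}$ is exactly the paper's $\mathbf{Q}=\mathbf{L}-\Id$ (the diagonally rescaled $\mathbf{K}$ minus the identity), and both arguments reduce singularity of $\mathbf{K}$ to $-1$ being an eigenvalue of this zero-diagonal irreducible stochastic matrix, then conclude via the Perron--Frobenius description of the peripheral spectrum and Lemma~\ref{Lemma1}. Your explicit kernel computation and the $\pm 1$ test vector for the bipartite direction are just a more hands-on presentation of the same reduction.
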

		\begin{proof}
			Let $\mathbf{L}$ be the matrix whose elements are defined by
			\[
			l_{ij} =  \frac{k_{ij}}{k_{ii}}.
			\]
			Note that $k_{ii} = \alpha_{ii} - 1 <0$ by the irreducibility assumption on $\mathbf{A}$. Moreover, $\mathbf{L}$ has the same rank as $\mathbf{K}$, it has nonnegative entries, $G$ as associated graph, and
			\[
			\sum_{j:j \neq i} l_{ij} = \frac{1}{1-\alpha_{ii}} \sum_{j:j \neq i} \alpha_{ij} = 1.
			\]
			It follows that $\mathbf{Q} := \mathbf{L} - \Id$ is a stochastic matrix with null diagonal elements. Thus, $\mathbf{K}$ is degenerate if and only if $-1$ is a eigenvalue of $\mathbf{Q}$ which, in turn, is equivalent to $\mathbf{Q}$ having even period. By Lemma \ref{Lemma1} this is equivalent to the fact that $G$ is bipartite. This completes the proof.
		\end{proof}
		
		We now recall two known results. The first is a standard result in {\em stochastic approximation} (it is a simple consequence of Propositions 4.1 and 4.2, and Theorem 6.9 of \cite{Benaim99}).
		\begin{theorem} \label{benaim}
			In a filtered probability space $(\Omega, \mathcal{A}, \mathbb{P}, \{\mathcal{F}_n\}_{n})$ let $(\mathbf{W}_n)_{n}$ be a $\mathbb{R}^N$-valued martingale difference (i.e. $\mathbb{E}(\mathbf{W}_n|\mathcal{F}_n) = 0$ for every $n$), and let $(\mathbf{X}_n)_{n}$ be defined by

			\[
			\begin{split}
				\mathbf{X}_{n+1} & = \mathbf{X}_n + r_n\left( F(\mathbf{X}_n) + \mathbf{W}_{n+1} \right), \\
				\mathbf{X}_0 & = \mathbf{x}_0 \in \R^N,
			\end{split}
			\]
			
			where $F:\R^N \rightarrow \R^N$ is a smooth function such that the dynamical system $\dot{\mathbf{x}} = F(\mathbf{x})$ has a unique, globally stable equilibrium $\mathbf{x}^* \in \mathbb{R}^N$. Assume there exists a constant $C$ such that $\|\mathbf{X}_n\| + \| \mathbf{W}_n\| \leq C$ for every $n \geq 1$. Then $(\mathbf{X}_n)_n$ converges a.s. to $\mathbf{x}^*$ as $n \rightarrow +\infty$.
		\end{theorem}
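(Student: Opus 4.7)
The plan is to split according to whether $G$ is bipartite; the non-bipartite case fits directly into the stochastic-approximation framework of Theorem~\ref{benaim}, while the bipartite case is handled by reducing to a well-studied cooperative reinforced dynamics. For the non-bipartite case, the recursion \eqref{equation2} has exactly the form required by Theorem~\ref{benaim}, with $F(\mathbf{x}) = \mathbf{K}\mathbf{x} + \mathbf{c}$ and bounded martingale increments $\mathbf{\Delta M}_{n+1}$; boundedness of $\mathbf{Z}_n$ is immediate since $Z_n(i)\in[0,1]$. By Proposition~\ref{matrixK} the matrix $\mathbf{K}$ is invertible, so the linear ODE $\dot{\mathbf{x}} = \mathbf{K}\mathbf{x}+\mathbf{c}$ has a unique equilibrium $\mathbf{x}^* = -\mathbf{K}^{-1}\mathbf{c}$, and a direct substitution using $\mathbf{A}\mathbf{1}=\mathbf{1}$ gives $\mathbf{x}^* = \tfrac12\mathbf{1}$. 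To check global stability I would invoke Gershgorin's theorem: the $i$-th disc of $\mathbf{K}$ is centered at $\alpha_{ii}-1\le 0$ with radius $1-\alpha_{ii}$, so it lies in the closed left half-plane and meets the imaginary axis only at the origin. Any purely imaginary eigenvalue must therefore vanish, and $0$ is excluded by invertibility of $\mathbf{K}$; hence every eigenvalue of $\mathbf{K}$ has strictly negative real part, $\mathbf{x}^*$ is globally asymptotically stable, and Theorem~\ref{benaim} yields $\mathbf{Z}_n\to\tfrac12\mathbf{1}$ almost surely.

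For the bipartite case I would introduce the transformed process $\tilde Z_n(i) = Z_n(i)$ for $i\in I$ and $\tilde Z_n(i) = 1 - Z_n(i)$ for $i\in J$. Using bipartiteness (for $i\neq j$, $\alpha_{ij}>0$ forces $i$ and $j$ to lie in opposite classes) together with row-stochasticity of $\mathbf{A}$ to absorb the constants, a short calculation shows that $\tilde{\mathbf{Z}}_n$ obeys the cooperative reinforced dynamics
\[
\tilde Z_{n+1}(i) = (1-r_n)\tilde Z_n(i) + r_n \tilde Y_{n+1}(i),\qquad
P\bigl(\tilde Y_{n+1}(i)=1 \,\big|\, \mathcal{F}_n\bigr) = \sum_{j} \alpha_{ij}\tilde Z_n(j),
\]
driven by the same irreducible stochastic matrix $\mathbf{A}$. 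This is exactly the interacting reinforced model studied in the cited literature, and the synchronization results of e.g.\ \cite{Crimaldi_DaiPra_Louis_Minelli_2019, Aletti_Crimaldi_Ghiglietti_2019} yield $\tilde{\mathbf{Z}}_n \to Z_\infty\mathbf{1}$ almost surely for some $[0,1]$-valued random variable $Z_\infty$. Transforming back gives the asserted form $Z_\infty(i)=Z_\infty$ for $i\in I$ and $Z_\infty(i)=1-Z_\infty$ for $i\in J$. The non-degeneracy $P(Z_\infty=x)<1$ for every $x$ I would obtain from the martingale $\pi^{\top}\tilde{\mathbf{Z}}_n$, where $\pi$ is the invariant law of $\mathbf{A}$: a standard predictable-quadratic-variation bound shows that its a.s.\ limit has strictly positive variance, so $Z_\infty$ cannot be almost surely constant.

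The subtlest step is the spectrum analysis in the non-bipartite case: Gershgorin alone places eigenvalues only in the closed left half-plane, so combining it with Proposition~\ref{matrixK} to rule out the eigenvalue $0$ is essential. The bipartite case has its own obstruction: the drift matrix $\mathbf{A}-\mathbf{I}$ of the transformed cooperative system has a one-dimensional kernel (spanned by $\mathbf{1}$), so Theorem~\ref{benaim} cannot be applied as a black box and one must import the specialized synchronization machinery from the cooperative reinforced-dynamics literature both for almost sure convergence and for non-triviality of the limit distribution.
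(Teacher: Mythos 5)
Your proposal does not address the statement it is supposed to prove. The statement is Theorem \ref{benaim}, the general stochastic--approximation result: for a bounded recursion $\mathbf{X}_{n+1}=\mathbf{X}_n+r_n(F(\mathbf{X}_n)+\mathbf{W}_{n+1})$ with bounded martingale-difference noise and a vector field $F$ whose flow has a unique globally stable equilibrium $\mathbf{x}^*$, one has $\mathbf{X}_n\to\mathbf{x}^*$ a.s. What you wrote is instead a proof of Theorem \ref{convergence} (convergence and synchronization for irreducible graphs), and it invokes Theorem \ref{benaim} as a black box in the non-bipartite case. Relative to the assigned statement this is circular: you assume exactly the result you were asked to establish, and none of the material you supply (bipartite splitting, Gershgorin discs, the cooperative transformation, the martingale $\pi^{\top}\tilde{\mathbf{Z}}_n$) bears on the general convergence claim for $(\mathbf{X}_n)_n$.

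For the record, the paper does not give a self-contained proof either: it derives Theorem \ref{benaim} from Propositions 4.1 and 4.2 and Theorem 6.9 of \cite{Benaim99}. A correct treatment along those lines runs as follows. The conditions $\sum_n r_n^2<\infty$ and $\|\mathbf{W}_n\|\le C$ make the cumulative noise $\sum_{k\le n} r_k\mathbf{W}_{k+1}$ an $L^2$-bounded martingale, hence a.s.\ convergent, which is what Propositions 4.1--4.2 of \cite{Benaim99} need to conclude that the piecewise-linear interpolation of $(\mathbf{X}_n)_n$ on the time scale $\tau_n=\sum_{k<n}r_k$ (which tends to $+\infty$ since $\sum_n r_n=\infty$) is a bounded asymptotic pseudotrajectory of the semiflow of $\dot{\mathbf{x}}=F(\mathbf{x})$. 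Theorem 6.9 of \cite{Benaim99} then says the limit set of such a pseudotrajectory is internally chain transitive, hence contained in the chain-recurrent set of the flow; for a flow with a unique globally stable equilibrium that set is $\{\mathbf{x}^*\}$, giving $\mathbf{X}_n\to\mathbf{x}^*$ a.s. Some version of this ODE-method argument (or an explicit Lyapunov/Robbins--Siegmund argument) is what your write-up is missing entirely.
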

		
		The next result is an easy consequence of Theorem 2.2 in \cite{aletti2024networks} and Theorem 3.3(i) in \cite{aletti2024networksProb}. It deals with the {\em cooperative} version of the system in \eqref{equation1}, namely
		\begin{align} \label{equation2}
			&\tilde{Z}_{n+1}(i)=(1-r_n)\tilde{Z}_n(i) + r_n \tilde{Y}_{n+1}(i), &i=1,\ldots, N,\\
			&P\left(\tilde{Y}_{n+1}(i)=1|\mathcal{F}_n\right) = \alpha_{ii}\tilde{Z}_n(i)+ \sum_{j\neq i}\alpha_{ij} \tilde{Z}_n(j),  &i=1,\ldots, N.\nonumber
		\end{align} 
		
		\begin{theorem}\label{convergenceCoop}
			The random sequences $\big(\tilde{Z}_{n}(i)\big)_{n},  \ i=1,\ldots N$,  converge to the same limit random variable $\tilde{Z}_{\infty}$, namely \, $
			\mathbf{\tilde{Z}}_n \xrightarrow{\text { a.s. }} \tilde{Z}_{\infty} \mathbf{1}
			$. Moreover, $P(\tilde{Z}_\infty=x)<1$ for every $x\in [0,1]$.
			
		\end{theorem}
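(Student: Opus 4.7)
The plan is to prove the two claims, synchronization and non-degeneracy, by a decomposition of the dynamics along the direction of the Perron eigenvector of the (implicitly irreducible) stochastic matrix $\mathbf{A}$ and along its complement; the detailed arguments are exactly those of the two cited references, so the main work is just to put the cooperative system into a form in which those results apply.

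First, I would rewrite \eqref{equation2} in the vector form
\[
\tilde{\mathbf{Z}}_{n+1} = \tilde{\mathbf{Z}}_n + r_n(\mathbf{A}-\mathbf{I}_N)\tilde{\mathbf{Z}}_n + r_n \tilde{\mathbf{\Delta M}}_{n+1},
\]
where $\tilde{\mathbf{\Delta M}}_{n+1}$ is a bounded martingale difference. Since $\mathbf{A}$ is an irreducible stochastic matrix, there is a unique probability vector $\pi$ with $\pi^T\mathbf{A}=\pi^T$, hence $\pi^T(\mathbf{A}-\mathbf{I}_N)=0$. Taking the inner product with $\pi$ kills the drift and yields
\[
\pi^T \tilde{\mathbf{Z}}_{n+1} = \pi^T \tilde{\mathbf{Z}}_n + r_n \pi^T \tilde{\mathbf{\Delta M}}_{n+1},
\]
so $(\pi^T\tilde{\mathbf{Z}}_n)_n$ is a $[0,1]$-valued martingale, which converges almost surely to a random limit that I will call $\tilde{Z}_\infty$. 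This is the candidate common limit.

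Next, to upgrade this to the synchronization statement $\tilde{\mathbf{Z}}_n\to\tilde{Z}_\infty\mathbf{1}$, I would track the disagreement $\tilde{\mathbf{D}}_n := (\mathbf{I}_N-\mathbf{1}\pi^T)\tilde{\mathbf{Z}}_n$, which lives in the invariant complement of $\mathrm{span}\{\mathbf{1}\}$. On this complement the spectral radius of $\mathbf{A}$ is strictly less than $1$ (Perron--Frobenius together with irreducibility), so $\tilde{\mathbf{D}}_n$ satisfies a contractive stochastic recursion driven by a projected martingale difference. Combining the step-size conditions $\sum r_n=\infty$, $\sum r_n^2<\infty$ with the contraction, a standard Robbins--Siegmund / stochastic approximation argument (of the type encoded in Theorem \ref{benaim}, here applied on $\mathbf{1}^\perp$ where the unique globally stable equilibrium of $\dot{\mathbf{x}}=(\mathbf{A}-\mathbf{I}_N)\mathbf{x}$ is $\mathbf{0}$) yields $\tilde{\mathbf{D}}_n\to\mathbf{0}$ almost surely. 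Together with the first step this gives $\tilde{\mathbf{Z}}_n\to\tilde{Z}_\infty\mathbf{1}$; this is precisely Theorem~2.2 of \cite{aletti2024networks}, which I would invoke directly.

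Finally, for the non-degeneracy $P(\tilde{Z}_\infty=x)<1$ for every $x\in[0,1]$, I would appeal to Theorem~3.3(i) of \cite{aletti2024networksProb}. Conceptually, once $\tilde{Z}_\infty=\lim_n \pi^T\tilde{\mathbf{Z}}_n$, the predictable quadratic variation of the scalar martingale $\pi^T\tilde{\mathbf{Z}}_n$ contains terms proportional to $r_n^2\sum_i\pi_i^2 p_n(i)(1-p_n(i))$ with $p_n(i)=(\mathbf{A}\tilde{\mathbf{Z}}_n)_i$, and a fixed deterministic limit $x\in(0,1)$ would be incompatible with the lower bound produced by reinforcement (a vanishing-variance argument), while $x\in\{0,1\}$ is excluded by the support of the initial condition and the positive probability of staying away from the boundary. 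The main obstacle in a self-contained proof would be this last quantitative argument ruling out atoms of the limit, which is nontrivial and is exactly what is handled in the cited references; modulo that citation the rest reduces to the spectral decomposition above.
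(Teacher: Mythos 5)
The paper offers no proof of this statement at all: it is recalled as an immediate consequence of Theorem 2.2 of \cite{aletti2024networks} and Theorem 3.3(i) of \cite{aletti2024networksProb}, which is exactly the route you take, and your martingale-plus-contraction sketch of what lies behind those citations is the standard (and correct) one. The only quibble is that for a periodic irreducible $\mathbf{A}$ the spectral radius on the complement of $\mathrm{span}\{\mathbf{1}\}$ need not be strictly less than $1$; what you actually need, and what holds, is that every eigenvalue of $\mathbf{A}-\mathbf{I}_N$ other than the simple one at $0$ has strictly negative real part.
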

		
		We finally recall a result for {\em diagonally dominant matrices}, whose proof is an immediate consequence of the first and third Ger\v{s}gorin Theorems (see Theorems 1.1 and 1.12 in \cite{varga2011gervsgorin}).
		\begin{theorem}\label{gervsgorin}
			Let $\mathbf{K}$ be a $N \times N$ real matrix with nonpositive entries.
			\begin{itemize}
				\item[(i)]
				Suppose that $\mathbf{K}$ is {\em diagonally dominant}: $|k_{ii}| \geq \sum_{j \neq i} |k_{ij}|$ for every $i =1,\ldots,N$. Then nonzero eigenvalues of $\mathbf{K}$ have strictly negative real part.
				\item[(ii)]
				Suppose that $\mathbf{K}$ is diagonally dominant, the associated graph is irreducible, and the strict inequality $|k_{ii}| > \sum_{j \neq i} |k_{ij}|$ holds for at least one $i =1,\ldots,N$. Then $\mathbf{K}$ is invertible.
			\end{itemize}
		\end{theorem}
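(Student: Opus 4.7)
My strategy is to treat the bipartite and non-bipartite cases separately, exploiting the dichotomy established by Proposition \ref{matrixK}. In the non-bipartite case $\mathbf{K}$ is invertible, so the linearized drift has a unique equilibrium that I expect to be $\frac{1}{2}\mathbf{1}$, and I will apply the stochastic-approximation result Theorem \ref{benaim}. In the bipartite case $\mathbf{K}$ is singular and the equilibrium set is a line, so this route fails; instead I will flip the states on one side of the bipartition to transform the competitive system into a cooperative one and invoke Theorem \ref{convergenceCoop}.

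For the non-bipartite case I would cast the recursion \eqref{equation2} into Theorem \ref{benaim}'s framework with $F(\mathbf{x}) = \mathbf{K}\mathbf{x} + \mathbf{c}$ and $\mathbf{W}_{n+1} = \mathbf{\Delta M}_{n+1}$. A direct computation using $\mathbf{K} = 2\,\mathrm{diag}(\mathbf{A}) - \mathbf{I}_N - \mathbf{A}$ together with $\mathbf{A}\mathbf{1} = \mathbf{1}$ shows that $\tfrac{1}{2}\mathbf{1}$ solves $\mathbf{K}\mathbf{x} + \mathbf{c} = 0$, and uniqueness comes from invertibility of $\mathbf{K}$ (Proposition \ref{matrixK}). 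Global asymptotic stability of the linear ODE $\dot{\mathbf{x}} = \mathbf{K}\mathbf{x}+\mathbf{c}$ reduces to all eigenvalues of $\mathbf{K}$ having strictly negative real part; since $|k_{ii}| = 1-\alpha_{ii} = \sum_{j \neq i}|k_{ij}|$, Theorem \ref{gervsgorin}(i) delivers negative real part for the nonzero eigenvalues, and invertibility excludes 0. Boundedness is immediate since $\mathbf{Z}_n \in [0,1]^N$ by convex combination and $\|\mathbf{\Delta M}_{n+1}\|$ is deterministically bounded. Theorem \ref{benaim} then yields $\mathbf{Z}_n \to \tfrac{1}{2}\mathbf{1}$ almost surely.

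For the bipartite case with partition $V = I \cup J$ I would define the flipped process by $\tilde{Z}_n(i) = Z_n(i)$, $\tilde{Y}_n(i) = Y_n(i)$ for $i \in I$, and $\tilde{Z}_n(i) = 1-Z_n(i)$, $\tilde{Y}_n(i) = 1-Y_n(i)$ for $i \in J$. The recursion $\tilde{Z}_{n+1}(i) = (1-r_n)\tilde{Z}_n(i) + r_n \tilde{Y}_{n+1}(i)$ then holds in both cases. The crux is to check, using bipartiteness (edges connect only $I$ and $J$) and stochasticity of $\mathbf{A}$, that $P(\tilde{Y}_{n+1}(i) = 1 \mid \mathcal{F}_n) = \alpha_{ii}\tilde{Z}_n(i) + \sum_{j \neq i}\alpha_{ij}\tilde{Z}_n(j)$ for every $i$. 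Hence $(\tilde{\mathbf{Z}}_n)$ is the cooperative system driven by the same irreducible matrix $\mathbf{A}$, and Theorem \ref{convergenceCoop} gives $\tilde{\mathbf{Z}}_n \to \tilde{Z}_\infty \mathbf{1}$ almost surely with $P(\tilde{Z}_\infty = x) < 1$ for every $x \in [0,1]$. Inverting the flip yields the claimed form of $\mathbf{Z}_\infty$ together with the non-degeneracy $P(Z_\infty = x) < 1$.

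The main obstacle is the conditional-probability calculation for $i \in J$ in the bipartite case: the summands $\alpha_{ij}(1-Z_n(j))$ involve vertices $j \in I$ where no flip is applied, so one must combine $\sum_{j \neq i}\alpha_{ij} = 1-\alpha_{ii}$ with $P(\tilde{Y}_{n+1}(i) = 1 \mid \mathcal{F}_n) = 1 - P(Y_{n+1}(i) = 1 \mid \mathcal{F}_n)$ to absorb the constants and recast the expression in cooperative form. Beyond this algebraic step and the verification that $\mathbf{K}\tfrac{1}{2}\mathbf{1} + \mathbf{c} = 0$, the argument is a clean assembly of the four preparatory results.
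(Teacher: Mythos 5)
Your proposal does not address the statement you were asked to prove. The statement is Theorem \ref{gervsgorin}, a purely linear-algebraic fact about an $N\times N$ real matrix $\mathbf{K}$ with nonpositive entries: (i) diagonal dominance forces every nonzero eigenvalue to have strictly negative real part, and (ii) diagonal dominance plus irreducibility plus strict dominance in at least one row forces invertibility. What you have written instead is a proof sketch of Theorem \ref{convergence} (the convergence and synchronization result), split into the non-bipartite and bipartite cases. Worse, your argument explicitly \emph{invokes} Theorem \ref{gervsgorin}(i) as an ingredient (``Theorem \ref{gervsgorin}(i) delivers negative real part for the nonzero eigenvalues''), so as a proof of Theorem \ref{gervsgorin} it is circular, and as a proof of anything else it is answering the wrong question.

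For the record, the paper does not spell out a proof either: it states that the result is an immediate consequence of the first and third Ger\v{s}gorin theorems (Theorems 1.1 and 1.12 in \cite{varga2011gervsgorin}). The intended argument is short. Since the entries of $\mathbf{K}$ are nonpositive, each Ger\v{s}gorin disc is centered at $k_{ii}\leq 0$ with radius $R_i=\sum_{j\neq i}|k_{ij}|\leq |k_{ii}|$, hence is contained in the closed left half-plane and meets the imaginary axis at most at the origin; every eigenvalue lies in the union of these discs, so any eigenvalue with nonnegative real part must equal $0$, which gives (i). Part (ii) is exactly the third Ger\v{s}gorin theorem for irreducibly diagonally dominant matrices: irreducibility together with strict dominance in at least one row excludes $0$ from the spectrum. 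If you want to resubmit, this is the argument you need to write down; the stochastic-approximation and state-flipping material belongs to the proof of Theorem \ref{convergence}, not here.
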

		
		\subsection{Proof of Theorem \ref{convergence}  }
		
		We first consider the case in which the graph $G$ is not bipartite that, by Proposition \ref{matrixK}, is equivalent to the fact that $\mathbf{K}$ is invertible. To the dynamics \eqref{equation2} we can apply Theorem \ref{benaim} with $F(\mathbf{x}) = \mathbf{Kx}+\mathbf{c}$. Indeed, we first notice that $F\left(\frac{1}{2}\mathbf{1}\right) = 0$, so that 
		\[
		F(\mathbf{x}) = \mathbf{K} \left(\mathbf{x}- \frac{1}{2}\mathbf{1}\right).
		\]
		Moreover, observing that $k_{ii}=\alpha_{ii}-1=-\sum_{j\neq i}\alpha_{ij}$, by Theorem \ref{gervsgorin} we have that all the eigenvalues of $\mathbf{K}$ have negative real part. Thus the assumption of Theorem \ref{benaim} are satisfied, and therefore $(\mathbf{Z}_n)_n$ converges a.s. to $\frac{1}{2}\mathbf{1}$.
		
		Now assume that $\mathbf{K}$ is not invertible, so, by Proposition \ref{matrixK}, 
		the graph is bipartite. Denote by $I,J \subset \{1,2,\ldots,N\}$ the components of a bipartition, so $\alpha_{ij}=0$ if $i,j\in I$ or $i,j\in J$. 
		Define \\
		$$\begin{array}{llcc}
			\widetilde{Z}_n(i)=Z_n(i),\ \ \ \ \ \ \ \ \ \widetilde{Y}_n(i)=Y_{n}(i),     &\    \mbox{if }\  i\in I;\\
			\widetilde{Z}_n(i)=1-Z_n(i),\ \ \ \ \widetilde{Y}_n(i)=1-Y_n(i),     &\  \mbox{if }\ i\in J.
		\end{array}$$
		Then, for $i\in I$,  
		$\ \widetilde{Z}_{n+1}(i)=(1-r_n)Z_n(i)+r_n Y_{n+1}$ with $P(\widetilde{Y}_{n+1}=1|\mathcal{F}_n)=P(Y_{n+1}=1|\mathcal{F}_n)=\alpha_{ii}+\sum_{j\in J}\alpha_{ij}\widetilde{Z}_n(i)\widetilde{Z}_j=\sum_{j=1}^N \alpha_{ij}\widetilde{Z}_j$. 
		For $i\in J$ we have 
		\begin{align*}
			\widetilde{Z}_{n+1}(i)&=1-Z_{n+1}(i)=1-[(1-r_n)Z_n(i)+r_n Y_{n+1}(i)]=(1-r_n)[1-Z_n(i)]+r_n[1-Y_{n+1}(i)]\\
			&= (1-r_n)\widetilde{Z}_n+r_n\widetilde{Y}_{n+1},
		\end{align*} 
		with 
		\begin{align*}
			P(\widetilde{Y}_{n+1}(i)=1|\mathcal{F}_n)&=P(Y_{n+1}=0|\mathcal{F}_n)=1-[\alpha_{ii}Z_n(i)+\sum_{j\in I}\alpha_{ij}(1-Z_n(j))]\\
			&=\alpha_{ii}+\sum_{j\neq i}\alpha_{ij}-[\alpha_{ii}Z_n(i)+\sum_{j\neq i}\alpha_{ij}(1-Z_n(j))]\\
			&=\sum_{j=1}^N\alpha_{ij}[1-Z_n(j)]=\sum_{j=1}^N \alpha_{ij}\widetilde{Z}_j.   
		\end{align*} 
		We have thus obtained a system of cooperative reinforced processes, to which we can apply Theorem \ref{convergenceCoop}, completing the proof.
		\begin{remark}
			Even though fluctuation results are beyond the scope of this paper, they can be established for non-bipartite graphs by applying results from the stochastic approximation literature. In the bipartite case, if the matrix $A$ is diagonalizable, we obtain the same equations as in the cooperative model, and the results in \cite{aletti2017synchronization} can be applied. However, if $A$ is not diagonalizable, to the best of our knowledge, no results are currently available.
		\end{remark}

		\section{Extension to reducible graphs}\label{sec:reducble_graph}
		
		In this section we continue to study a system given by \eqref{equation1}, but relaxing the assumption of irreducibility of $G$. To illustrate the argument we first consider a simple model, where the graph contains two disjoint classes of communicating agents $T_1, R_1\subset \{1,\ldots, N\}$ such that there are no edges $(i,j)\in E$ with $i\in R_1$ and $j\in T_1$ and 
		there is at least one oriented edge $(i, j)\in E$ with $i\in T_1$ and $j\in R_1$. 
		This means that the subgraph of $G$ given by  $(R_1, E_1)$, where  $E_1=E\cap \{(i, j): i, j\in R_1\}$, is irreducible and agents in $R_1$ form an autonomous system to which we can apply Theorem \ref{convergence}.
		
		Reordering the indices in such a way that $R_1=\{1, 2,\ldots, N_0\}$, with $N_0<N$, and $T_1=\{N_0+1,\ldots, N_0+N_1\}$ with $N_0+N_1=N$, the matrix $\mathbf{A}$ takes the following form 
		$$ \mathbf{A}=\left(\begin{array}{llcc}
			\mathbf{A}_0 &  \mathbf{0} \\
			\mathbf{B} & \mathbf{A}_1
		\end{array}\right),
		$$ 
		where $\mathbf{A}_0$ (respectively, $\mathbf{A}_1$) describes the  interaction weights among agents of the class $R_1$ (respectively, $T_1$) and the $N_1\times N_0$ matrix $\mathbf{B}$ has at least one positive entry and describes how agents in $T_1$ are influenced by agents in $R_1$. For a clearer use of this decomposition, we denote by $\mathbf{Z}_n= (Z_n(i))_{i=1}^{N_0}\in \R^{N_0}$ the state components in $R_1$, and by $\mathbf{U}_n=(U_n(i))_{i={N_0+1}}^{N}\in \R^{N_1}$ the state components in $T_1$, so that the equations for the dynamics take the following form
		\begin{align}
			&Z_{n+1}(i)=(1-r_n)Z_n(i) + r_n Y_{n+1}(i), &1\leq i\leq  N_0; \label{mainEQN1}\\
			&P\left(Y_{n+1}(i)=1|\mathcal{F}_n\right) = \alpha_{ii}Z_n(i)+ \sum_{j\neq i, j\in R_1}\alpha_{ij} \left(1-Z_n(j)\right), &1\leq i \leq N_0;\nonumber\\
			&U_{n+1}(i)=(1-r_n)U_n(i) + r_n Y_{n+1}(i), &N_0 < i \leq  N; \label{mainEQN2}\\
			&\begin{aligned}P\left(Y_{n+1}(i)=1|\mathcal{F}_n\right) = \alpha_{ii}U_n(i)&+ \sum_{j\neq i, j\in T_1}\alpha_{ij} \left(1-U_n(j)\right)\\
				&\qquad+\sum_{ j\in R_1}\alpha_{ij} \left(1-Z_n(j)\right),\end{aligned}  &N_0<i\leq  N.\nonumber
		\end{align}                                               
		
		Equations \eqref{mainEQN1} and \eqref{mainEQN2} can be written in vector form as  
		
		\begin{align}\label{RED}
			\mathbf{Z}_{n+1}&=\mathbf{Z}_n+ r_n (\mathbf{K}_0\mathbf{Z}_n+\mathbf{c}^{(0)})+ r_n \mathbf{\Delta M}^{(0)}_{n+1};\\
			\mathbf{U}_{n+1}&=\mathbf{U}_n +r_n (\mathbf{K}_1\mathbf{U}_n+\mathbf{c}^{(1)}_n)+ r_n\mathbf{\Delta M}^{(1)}_{n+1},
		\end{align}
		where $\mathbf{K}_i=2 \mathrm{diag}(\mathbf{A}_i)-{\mathrm{\mathbf{I}}}_{N_i}-\mathbf{A}_i,\ i=0,1$, with  $\mathrm{\mathbf{I}}_{N_i}$ denoting the identity matrix of dimension $N_i$, $\mathbf{c}^{(0)}=(1-\alpha_{ii})_{i=1}^{N_0}$, and $\mathbf{c}_n^{(1)}=\mathbf{c}^{(1)}-\mathbf{B}\mathbf{Z}_n$, with $\mathbf{c}^{(1)}=(1-\alpha_{ii})_{i=N_0+1}^N$.\\
		Equation \eqref{mainEQN1} is of the form \eqref{equation1}, for the interaction given by the restriction of $G$ to the vertices in $R_1$, which is irreducible, so by Theorem \ref{convergence} $\mathbf{Z}_n$ converges to a possibly random limit. Equation \eqref{mainEQN2} contains the sequence $\mathbf{Z}_n$ as a {\em forcing input}. For this reason we now devote our attention to a modification of \eqref{equation1} obtained by adding a forcing input.
		
		\subsection{The case with a forcing input}\label{sec:competitiveforce}
		In this section we assume that the interaction matrix $\mathbf{A}$ is substochastic with $\sum_{i=1}^N\alpha_{ij}=:\alpha_{i}\leq 1$ for every $i\in \{1,\ldots, N\}$ and  at least one $h\in \{1,\ldots, N\}$ such that $\alpha_h<1$. Let us consider a possibly random sequence $(\mathbf{q}_n)_n$ in  $[0,1]^N$ and the dynamics
		\begin{align}\label{equation3}
			&Z_{n+1}(i)=(1-r_n)Z_n(i) + r_n Y_{n+1}(i) ,\qquad\qquad\qquad\qquad\qquad\qquad\qquad\quad i=1,\ldots, N,\\
			&P\left(Y_{n+1}(i)=1|\mathcal{F}_n\right) = \alpha_{ii}Z_n(i)+ \sum_{j\neq i}\alpha_{ij} \left(1-Z_n(j)\right)+(1-\alpha_i)q_{n}(i), \ i=1,\ldots, N,\nonumber
		\end{align}  
		where $q_{n}(i)$ is the $i$-th component of $\mathbf{q}_n$.

		Models of this type have been  studied for cooperative systems in the case of $q_{n}(i) \equiv q$ (see, e.g. \cite{aletti2017synchronization}). Our results, which include random non-constant sequences, can be extended to cooperative systems as we point out in the Remark \ref{THMQcooperative} at the end of the section, thus extending the results in \cite{aletti2017synchronization}.\\

		Equation \eqref{equation3} in vector form is given by 
		\begin{equation}\label{equation5}
			\mathbf{Z}_{n+1}=\mathbf{Z}_n+ r_n (\mathbf{K}\mathbf{Z}_n+\mathbf{c}_n)+ r_n \mathbf{\Delta M}_{n+1},
		\end{equation}
		where $\mathbf{K}=2\mathrm{diag}(\mathbf{A})-{\mathrm{\mathbf{I}}}_{N}-\mathbf{A}=(k_{ij})_{i,j=1}^N$  
		and $\mathbf{c}_n=(c_n(i))_{i=1}^N$ with $c_n(i)=\alpha_i-\alpha_{ii}+(1-\alpha_i)q_{n}(i)$. 
		
		\begin{theorem} \label{forcing}
			The matrix $\mathbf{K}$ is invertible. Moreover, suppose there exists a random vector $\mathbf{q}=(q(i))_{i=1}^N \in [0,1]^N$ such that
			\[
			\lim_{n \rightarrow +\infty} q_n(i) = q(i)\quad  \text{a.s.},
			\]
			for all $i=1,2,\ldots,n$. Then, as $n \rightarrow +\infty$, $\mathbf{Z}_n$ converges a.s. to $-\mathbf{K}^{-1} \mathbf{c}$, where  $\mathbf{c}=(c(i))_{i=1}^N$ is given by
			\[
			c(i) = \lim_{n \rightarrow +\infty} c_n(i) = \alpha_i-\alpha_{ii}+(1-\alpha_i)q(i).
			\]
			
		\end{theorem}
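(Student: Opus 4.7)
\textbf{Invertibility of $\mathbf{K}$.} I would appeal directly to Theorem~\ref{gervsgorin}. Since $k_{ii}=\alpha_{ii}-1$ and $k_{ij}=-\alpha_{ij}$ for $i\neq j$, a one-line calculation gives
\[
|k_{ii}| - \sum_{j\neq i}|k_{ij}| \;=\; (1-\alpha_{ii}) - (\alpha_i-\alpha_{ii}) \;=\; 1-\alpha_i \;\geq\; 0,
\]
with strict inequality at $i=h$. Under irreducibility of the associated graph (the case of interest in the intended application of Theorem~\ref{forcing} in Section~\ref{sec:reducble_graph}, where the forced subsystem arises by restriction to an irreducible class), Theorem~\ref{gervsgorin}(ii) gives invertibility of $\mathbf{K}$, and Theorem~\ref{gervsgorin}(i) then guarantees that every eigenvalue of $\mathbf{K}$ has strictly negative real part; equivalently, the linear ODE $\dot{\mathbf{x}}=\mathbf{K}\mathbf{x}$ is globally exponentially stable.

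\textbf{Convergence.} Set $\mathbf{x}^*:=-\mathbf{K}^{-1}\mathbf{c}$, $\mathbf{W}_n:=\mathbf{Z}_n-\mathbf{x}^*$, and $\mathbf{e}_n:=\mathbf{c}_n-\mathbf{c}$. By hypothesis $\mathbf{e}_n\to 0$ a.s., and equation~\eqref{equation5} rewrites as
\[
\mathbf{W}_{n+1} \;=\; (\mathbf{I}_N+r_n\mathbf{K})\mathbf{W}_n + r_n\mathbf{e}_n + r_n\mathbf{\Delta M}_{n+1}.
\]
Because the spectrum of $\mathbf{K}$ lies in the open left half-plane, there is a symmetric positive definite $\mathbf{P}$ solving the Lyapunov equation $\mathbf{K}^{\!\top}\mathbf{P}+\mathbf{P}\mathbf{K}=-\mathbf{I}_N$, and I would work with $V_n:=\mathbf{W}_n^{\!\top}\mathbf{P}\mathbf{W}_n$. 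Expanding $V_{n+1}$ from the recursion, applying $2ab\leq\tfrac{1}{2}a^2+2b^2$ to the cross term $2r_n\mathbf{W}_n^{\!\top}\mathbf{P}\mathbf{e}_n$, and using that $\mathbf{Z}_n\in[0,1]^N$, that $\mathbf{\Delta M}_{n+1}$ is bounded, and that $E[\mathbf{\Delta M}_{n+1}\mid\mathcal{F}_n]=0$, one obtains a pathwise inequality of the form
\[
V_{n+1} \;\leq\; (1-\kappa r_n)\,V_n \;+\; C\,r_n\|\mathbf{e}_n\|^2 \;+\; C'\,r_n^2 \;+\; r_n\,\xi_{n+1},
\]
for some positive constants $\kappa,C,C'$ and with $\xi_{n+1}:=2\mathbf{W}_n^{\!\top}\mathbf{P}\mathbf{\Delta M}_{n+1}$ a bounded martingale difference.

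\textbf{Finishing and main obstacle.} The martingale contribution is benign: since $\xi_{n+1}$ is bounded and $\sum r_n^2<+\infty$, the martingale $\sum_n r_n\xi_{n+1}$ has summable conditional quadratic variation and converges a.s., so it can be absorbed. The main obstacle is the deterministic, non-martingale, vanishing term $C r_n\|\mathbf{e}_n\|^2$: since $\|\mathbf{e}_n\|^2$ is not a priori summable, neither the Robbins--Siegmund lemma nor Theorem~\ref{benaim} applies directly. The plan is an $\varepsilon$-truncation: for arbitrary $\varepsilon>0$, work on the full-probability event where $\|\mathbf{e}_n\|\leq\varepsilon$ for all $n\geq n_0(\varepsilon)$; iterating the contractive inequality from $n_0$ onward and using $\prod_{j=k+1}^{n-1}(1-\kappa r_j)\leq\exp(-\kappa\sum_{j=k+1}^{n-1}r_j)$ together with $\sum r_k=+\infty$ gives $\limsup_n V_n\leq C''\varepsilon^2$ a.s., and letting $\varepsilon\downarrow 0$ yields $V_n\to 0$, equivalently $\mathbf{Z}_n\to\mathbf{x}^*=-\mathbf{K}^{-1}\mathbf{c}$ a.s.
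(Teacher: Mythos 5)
Your treatment of invertibility coincides with the paper's: both invoke Theorem \ref{gervsgorin}(ii), and you usefully make explicit the diagonal-dominance margin $|k_{ii}|-\sum_{j\neq i}|k_{ij}|=1-\alpha_i\geq 0$ (strict at $i=h$) and the irreducibility hypothesis, which the paper leaves tacit but which indeed holds in the intended application to communicating classes. For the convergence part you take a genuinely different route: the paper applies the stochastic-approximation result, Theorem \ref{benaim}, to $F(\mathbf{x})=\mathbf{K}(\mathbf{x}+\mathbf{K}^{-1}\mathbf{c})$ and disposes of the non-constant forcing by citing Remark 4.5 of \cite{Benaim99} for the vanishing perturbation $\mathbf{b}_n=\mathbf{c}_n-\mathbf{c}\to 0$, whereas you reprove that extension by hand in this linear setting, via the Lyapunov function $V_n=\mathbf{W}_n^{\top}\mathbf{P}\mathbf{W}_n$, a perturbed contraction inequality, and an $\varepsilon$-truncation. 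The paper's route buys brevity and rests on a standard reference; yours buys self-containedness and shows exactly where $\mathbf{Z}_n\in[0,1]^N$, $\sum_n r_n^2<\infty$, $\sum_n r_n=\infty$ and $\mathbf{e}_n\to 0$ enter, which is a legitimate and instructive alternative.

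There is, however, one step that does not hold as written: $\xi_{n+1}=2\mathbf{W}_n^{\top}\mathbf{P}\mathbf{\Delta M}_{n+1}$ is not an $(\mathcal{F}_n)$-martingale difference, because $\mathbf{W}_n=\mathbf{Z}_n-\mathbf{x}^*$ contains $\mathbf{x}^*=-\mathbf{K}^{-1}\mathbf{c}$, and $\mathbf{c}$ depends on the limit $\mathbf{q}=\lim_n\mathbf{q}_n$, which is $\mathcal{F}_\infty$-measurable but in general not $\mathcal{F}_n$-measurable (in the application of Section \ref{sec:reducble_graph} it is the limit of lower-level classes, and nothing in the model rules out correlation between that limit and $\mathbf{\Delta M}_{n+1}$ given $\mathcal{F}_n$). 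Hence $E[\xi_{n+1}\mid\mathcal{F}_n]=-2\,E[(\mathbf{x}^*)^{\top}\mathbf{P}\mathbf{\Delta M}_{n+1}\mid\mathcal{F}_n]$ need not vanish, and the a.s. convergence of $\sum_n r_n\xi_{n+1}$ cannot be asserted by the martingale argument you give. The repair is routine: center the cross term at the adapted point $\mathbf{x}^*_n:=-\mathbf{K}^{-1}\mathbf{c}_n$ (note $\mathbf{q}_n$ must be $\mathcal{F}_n$-measurable for the conditional probability in \eqref{equation3} to make sense), writing $\xi_{n+1}=2(\mathbf{Z}_n-\mathbf{x}^*_n)^{\top}\mathbf{P}\mathbf{\Delta M}_{n+1}+2(\mathbf{x}^*_n-\mathbf{x}^*)^{\top}\mathbf{P}\mathbf{\Delta M}_{n+1}$: the first summand is a bounded martingale difference, and the second is bounded by a constant times $\|\mathbf{c}_n-\mathbf{c}\|\to 0$, so it can be lumped with the vanishing term you already treat by $\varepsilon$-truncation. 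Relatedly, after iterating the contraction you are left with the discounted sum $\sum_{k}\bigl(\prod_{j=k+1}^{n-1}(1-\kappa r_j)\bigr)r_k\xi_{k+1}$, and its convergence to $0$ requires the standard Abel-summation/Toeplitz step applied to the convergent series, not merely the convergence of $\sum_k r_k\xi_{k+1}$; this is routine but should be stated. With these two repairs your argument is complete and yields the same conclusion as the paper's.
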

		
		\begin{proof}
			The invertibility of $\mathbf{K}$ follows from Theorem \ref{gervsgorin} (ii). Theorem \ref{benaim} applied to $F(x) = \mathbf{K}(x+\mathbf{K}^{-1}\mathbf{c})$ yields the desired result in the case $\mathbf{c}_n \equiv \mathbf{c}$. Otherwise, just observe that \eqref{equation5} can be rewritten as
			\[
			\mathbf{Z}_{n+1}=\mathbf{Z}_n+ r_n (F(\mathbf{Z}_n) + \mathbf{b}_n)+r_n \mathbf{\Delta M}_{n+1},
			\]
			with $\mathbf{b}_n := \mathbf{c}_n - \mathbf{c} \xrightarrow{\text { $n \rightarrow +\infty$ }} 0$. As observed in Remark 4.5 of \cite{Benaim99}, Theorem \ref{benaim} extends readily to this case.
			
		\end{proof}
		
		\begin{remark}\label{THMQcooperative}
			The results above extend to the cooperative case, i.e. when in \eqref{equation3} the term $\sum_{j\neq i}\alpha_{ij} \left(1-Z_n(j)\right)$ is replaced by $\sum_{j\neq i}\alpha_{ij} Z_n(j)$. 
			In this case the dynanics can still be written in the form \eqref{equation5}, with $\mathbf{K}=\mathbf{A}-\mathbf{I}_N$ and  $\mathbf{c}_n=\big((1-\alpha_i)q_n(i)\big)_{i=1}^N$. 
		\end{remark}

		\subsection{Reducible graphs}
		
		We now generalize the representation obtained in \eqref{RED}.
		In order to study the general case we construct a hierarchical representation of the graph $G$. 
		We call \emph{class of level 0} a \emph{closed} class $R$ of communicating agents (i.e. a subset of $V$ of communicating agents such that $\alpha_{i,j}=0$ for any $i\in R$ and $j\in R^C$).\\
		Any finite graph $G$ contains at least one class of level 0.\\ 
		
		We denote by $R_1,\ldots, R_{k_0}$, with $k_0\in \{1,\ldots, N\}$, the classes of level 0 in $G$ and we set $\mathbf{R}=\bigcup_{\ell=0}^{k_0}R_\ell$.\\

		A \emph{class of level 1} is a set $T\subset V$ of communicating agents such that $\alpha_{i,j}=0$, for any $i\in T$ and $j\in T^C\cap \mathbf{R}^C$, and there exist at least one $i\in T$ and $j\in \mathbf{R}$ such that $\alpha_{i,j}>0$.  
		
		We denote by $T_{1,1}, T_{1,2},\ldots T_{1, k_1}$ with $k_1\in \{1, \ldots, N-|\mathbf{R}|\}\ $ (where $|\mathbf{R}|$ is the cardinality of $\mathbf{R}$) the classes of level 1 (which are necessarily disjoint) and we set $\mathbf{T_1}=\bigcup_{\ell=1}^{k_1}T_{1,\ell}$. 
		
		For $2\leq m\leq N-1$, a \emph{class of level} $m$ is a set $T\subset V$ of communicating agents such that there exists at least one $i\in T$ and $j\in \mathbf{T_{m-1}}$ such that $\alpha_{i,j}>0$ and $\alpha_{i,j}=0$, for any $i\in T$ and $j\in T^C\cap \big[\big(\bigcup_{s=1}^{m-1} \mathbf{T_s}\big)^C \cap  \mathbf{R}^C\big]$. 
		
		We denote by $T_{m, 1}, T_{m, 2}, \ldots, T_{m, k_m}$ with $k_m\in \{1, \ldots N-|R\cup T_{\mathbf{1}}\cup\ldots\cup T_{\mathbf{m-1}}|\}$
		the classes of level $m$, which are clearly disjoint. 
		
		We have obviously $\mathbf{T_{m}}\cap \mathbf{R}=\emptyset$ for every $m$ and $\mathbf{T_s}\cap \mathbf{T_m}=\emptyset$, for every $s, m$.\\
		
		The idea is that each class of level 0 is closed, each class of level $1$ has at least one edge connecting it to level 0 and no edges connecting it to other classes of level greater than or equal to $1$ and, for $m\geq 1$, each class of level $m$ has at least one edge connecting it to level $m-1$ and no edges connecting it to classes of level greater than or equal to $m$. This means that agents of level $m$ interact only with agents in the same class or in classes of lower level.

		Let $M$ be the number of levels and let $N_{m,\ell}=|T_{m,\ell}|\ $, $m=0,\ldots , M,\ \ \ell=1,\ldots k_m$ (where we have used the convention $R_\ell=T_{0, \ell}$). We have $\sum_{m=1}^M \sum_{\ell=1}^{k_m} N_{m,\ell}=N$ and we can assume that agents are ordered according to levels and classes in such a way that \begin{multline*} 
			\{1,\ldots, N\}=
			\{1,\ldots, N_{0,1},\  N_{0,1}+1, \ldots, N_{0,1}+N_{0,2},\  N_{0, 1}+N_{0,2}+1, \ldots ,\sum_{\ell=1}^{k_{0}}N_{0,\ell},\ \ldots  \\
			\ldots\ ,\  \sum_{m=1}^{M-1}\sum_{\ell=1}^{k_{m}}N_{m,\ell}+1
			,\ldots,\sum_{m=0}^{M}\sum_{\ell=1}^{k_{m}} N_{m,\ell}\}.
		\end{multline*}  In this case the interaction matrix can be written as follows:
		
		$$ \mathbf{A}=\left(\begin{matrix}
			\mathbf{A_0} &  0  & 0 &\cdots & 0 & 0\\
			\mathbf{B_{1,0}} & \mathbf{A_1}& 0 &\cdots & 0 & 0\\
			\mathbf{B_{2,0}} & \mathbf{B_{2,1}} & \mathbf{A_2}&\cdots & 0 & 0\\
			\cdots & \cdots & \cdots & \cdots & \cdots & \cdots\\
			\mathbf{B_{M,0}} & \mathbf{B_{M,1}} & \mathbf{B_{M,2}}&\cdots &\mathbf{B_{M,M-1}} &  \mathbf{A_M}
		\end{matrix}\right).
		$$                                                              
		In the above formula 
		$$ \mathbf{A_0}=\left(\begin{matrix}
			\mathbf{A}_{0,1} &  0  & \cdots &0 \\
			0 & \mathbf{A}_{0,2}  &  \cdots &0\\
			\cdots & \cdots & \cdots & \cdots \\
			0 & 0 & \cdots & \mathbf{A}_{0,k_0},
		\end{matrix}\right)
		$$  
		and, in general, 
		$$ \mathbf{A_{m}}=\left(\begin{matrix}
			\mathbf{A}_{m,1} &  0  & \cdots &0 \\
			0 & \mathbf{A}_{m,2}  &  \cdots &0\\
			\cdots & \cdots & \cdots & \cdots \\
			0 & 0 & \cdots & \mathbf{A}_{m,k_m}
		\end{matrix}\right),
		$$      
		where $\mathbf{A}_{0,\ell}=(\alpha_{i,j})_{i, j \in R_{\ell}}$ and $  \mathbf{A}_{m, \ell}=(\alpha_{i,j})_{i, j \in T_{m, \ell}}$, for $1\leq m\leq M$.
		The matrix $\mathbf{A_m}$ describes self interactions in classes of level $m$. More precisely, 
		for $m=0,\ldots, M$ and $\ell=1,\ldots, k_m$, the matrix $\mathbf{A}_{m,\ell}$ represents how agents in class $\ell$ of level $m$ influence each other.\\

		The matrix $\mathbf{B_{1, 0}}$ describes how classes of level 1 are influenced by classes of level 0. It is given by\\
		
		$$ \mathbf{B_{1, 0}}=\left(\begin{matrix}
			\mathbf{B}_{(1,1)}^{(0,1)} &   \mathbf{B}_{(1,1)}^{(0,2)}  & \cdots &\mathbf{B}_{(1,1)}^{(0, k_0)} \vspace{0.2cm} \\
			\mathbf{B}_{(1, 2)}^{(0,1)}  & \mathbf{B}_{(1,2)}^{(0,2)}  &  \cdots & \mathbf{B}_{(1,2)}^{(0, k_0)}  \vspace{0.2cm}\\
			\cdots & \cdots & \cdots & \cdots \\
			\cdots & \cdots & \cdots & \cdots  \vspace{0.2cm} \\
			\mathbf{B}_{(1, k_1)}^{(0,1)}  & \mathbf{B}_{(1,  k_1)}^{(0, 2)}  & \cdots & \mathbf{B}_{(1,k_1)}^{(0,k_0)} 
		\end{matrix}\right),
		$$      
		and in general, for $0\leq t<m$, $\mathbf{B_{m,t}}$ describes how classes of level $m$ are influenced by classes of level $t<m$. It is given by 
		$$ \mathbf{B_{m, t}}=\left(\begin{matrix}
			\mathbf{B}_{(m,1)}^{(t,1)} &   \mathbf{B}_{(m ,1)}^{(t,2)}  & \cdots &\mathbf{B}_{(m,1)}^{(t, k_t)} \vspace{0.2cm} \\
			\mathbf{B}_{(m, 2)}^{(t,1)}  & \mathbf{B}_{(m,2)}^{(t,2)}  &  \cdots & \mathbf{B}_{(m,2)}^{(t, k_t)}  \vspace{0.2cm}\\
			\cdots & \cdots & \cdots & \cdots \\
			\cdots & \cdots & \cdots & \cdots  \vspace{0.2cm} \\
			\mathbf{B}_{(m, k_m)}^{(t,1)}  & \mathbf{B}_{(m,  k_m)}^{(t, 2)}  & \cdots & \mathbf{B}_{(m,k_m)}^{(t, k_t)} 
		\end{matrix}\right),
		$$     
		
		where, for  $m=1,\ldots, M,\ \  t=0,\ldots , m-1,\   h=1,\ldots k_m,\ s=1,\ldots, k_t$, we have
		\begin{eqnarray*}
			\mathbf{B}_{(m , h)}^{(0, s)} &=& (\alpha_{i,j})_{i\in T_{m, h}, \  j\in R_s},\ \ \ \ t=0,\\ 
			\mathbf{B}_{(m , h)}^{(t, s)} &=& (\alpha_{i,j})_{i\in T_{m, h},\  j\in T_{t, s}},\ \ \  t>0. 
		\end{eqnarray*}
		More precisely, $\mathbf{B}_{(m , h)}^{(t, s)}$ represents how agents in class $h$ of level $m$ are influenced by agents in class $s$ of level $t$.\\
		Recall that, by construction, dynamics of agents in classes of level from 0 to $m$ are autonomous with respect to dynamics of classes with level higher than $m$. \\
		Let us denote by 
		$\mathbf{Z}^{(\ell)}_n\in \R^{N_{0, \ell}},\ \ \ell=1,\ldots k$ the ''class $\ell$ variable'' of level 0 (i.e. the vector whose components are the states of agents belonging to class $\ell$ of level 0), and by \\
		$\mathbf{U}^{(m, h)}_n\in \R^{N_{m,h}},\ \ m=1,2,\ldots M,\ h=1,\ldots, k_m$ the ''class $h$ variable'' of level $m$ (i.e. the vector of states of agents belonging to class $h$ of level $m$.\\
		The vector of states is thus given by $$\mathbf{\bar{Z}}_n=\big( (\mathbf{Z}^{(\ell)}_n)_{1\leq\ell\leq k_0}; (\mathbf{U}^{(1, h)}_n)_{1\leq h\leq k_1}; \ldots ;(\mathbf{U}^{(M, h)}_n)_{1\leq h\leq k_M}\big).$$
		
		Assume for the moment that all the classes are competitive, in such a way that the dynamics can be described by equations of the type \eqref{mainEQN1} and \eqref{mainEQN2}, where $R_1$ contains all the agents in classes of level 0 and  $T_1=R_1^C$. 
		
		Letting, for $m=0,1,\ldots, M$ and $\ell=1,\ldots, k_m$, 
		$\ \mathbf{K}_{(m, \ell)}:=2 \mathrm{diag}(\mathbf{A}_{(m,\ell)})-{\mathrm{\mathbf{I}}}_{N_{m,\ell}}-\mathbf{A}_{(m,\ell)}$  with  $\mathrm{\mathbf{I}}_{N_{m, \ell}}$ denoting the identity matrix of dimension $N_{m,\ell}$, the system dynamics is described by the following equations
		
		\begin{align}
			&\mathbf{Z}_{n+1}^{(\ell)}=\mathbf{Z}^{(\ell)}_n+ r_n (\mathbf{K}_{(0,\ell)}\mathbf{Z}_n^{(\ell)}+\mathbf{c}^{(0, \ell)})+ r_n \mathbf{\Delta M}^{(0,\ell)}_{n+1}, &1\leq \ell\leq k, \label{EQNz}\\
			&\ \mathbf{c}^{(0,\ell)}=(1-\alpha_{ii})_{i\in R_\ell},\nonumber\\
			& \nonumber\\
			&\mathbf{U}^{(1,h)}_{n+1}=\mathbf{U}^{(1,h)}_n +r_n\big( \mathbf{K}_{(1,h)}\mathbf{U}^{(1,h)}_n+\mathbf{c}^{(1,h)}_n\big)+ r_n\mathbf{\Delta M}^{(1,h)}_{n+1},&    1\leq h\leq k_1,\label{EQNm1}\\
			&\ \mathbf{c}^{(1, h)}_n=(1-\alpha_{ii})_{i\in T_{1, h}}- \sum_{\ell=1}^{k_0} \mathbf{B}_{(1, h)}^{(0, \ell)}\mathbf{Z}^{(\ell)}_n,\nonumber
		\end{align}
		
		and, for  $m=1,\ldots ,M-1$ and $h=1,\ldots, k_{m+1}$,  
		
		\begin{align}\label{EQNm}
			&\mathbf{U}^{(m+1, h)}_{n+1}=\mathbf{U}^{(m+1, h)}_n +r_n \big(\mathbf{K}_{(m+1, h)} \mathbf{U}^{(m+1, h)}_n + \mathbf{c}^{(m+1, h)}_n\big)+ r_n \mathbf{\Delta M}^{{(m+1, h)}}_{n+1},\\
			&\ \mathbf{c}^{(m+1, h)}_n=\mathbf{c}^{(m+1, h)}- \sum_{\ell=1}^{k_0} \mathbf{B}_{(m+1, h)}^{(0, \ell)}\mathbf{Z}^{(\ell)}_n -\sum_{t=1}^m \sum_{s=1}^{k_t} \mathbf{B}_{(m+1, h)}^{(t, s)} \mathbf{U}^{(t, s)}_n \nonumber,
		\end{align} 
		where in the above formulas, for any fixed $m\in \{0,\ldots, M\}$ and $h\in\{1,\ldots k_m\}$, the sequence $\ (\mathbf{\Delta M}^{(m,\ell)}_{n})_{n\geq 1}$
		is a martingale difference and $\mathbf{c}^{(m+1, h)}=\big(1-\alpha_{ii})_{i\in T_{m+1, h}}$.\\
		
		Now, observe that we can repeat the arguments above if $R_{\ell}$ or $T_{m+1, h},\ m\geq 0$ is a class of cooperative agents: in this case in equations \eqref{mainEQN1} (respectively, \eqref{mainEQN2}), for $i\in R_{\ell}$ (respectively, $i\in T_{m+1, h}$) we have to replace $1-Z_n(j)$ with $Z_n(j)$ (respectively $1-U_n(j)$ with $U_n(j)$). In this cases $(\mathbf{Z}^{(\ell)}_n)_n$ satisfies the equation 
		\begin{equation}\label{EQNCooperativeZ}
			\mathbf{Z}^{(\ell)}_{n+1}=\mathbf{Z}^{(\ell)}_n +r_n \mathbf{K}_{(0,\ell)} \mathbf{Z}^{(0,\ell)}_n+ r_n \mathbf{\Delta M}^{{(0, \ell)}}_{n+1},
		\end{equation} 
		with $\mathbf{K}_{(0,\ell)}=\mathbf{A}_{(0,\ell)}-\mathbf{I}_{N_{0,\ell}}$, while  $(\mathbf{U}^{(m+1, h)}_n)_n$ satisfies  
		
		\begin{align}
			& \mathbf{U}^{(1,h)}_{n+1}=\mathbf{U}^{(1,h)}_n +r_n\big( \mathbf{K}_{(1,h)}\mathbf{U}^{(1,h)}_n+\mathbf{c}^{(1,h)}_n\big)+ r_n\mathbf{\Delta M}^{(1,h)}_{n+1},\label{EQNmcooperative1}\\
			&\ \mathbf{c}^{(1, h)}_n= \sum_{\ell=1}^{k_0} \mathbf{B}_{(1, h)}^{(0,\ell)}\mathbf{Z}^{(\ell)}_n,\nonumber\\
			&\nonumber\\ 
			&\mathbf{U}^{(m+1, h)}_{n+1}=\mathbf{U}^{(m+1, h)}_n +r_n \big(\mathbf{K}_{(m+1, h)} \mathbf{U}^{(m+1, h)}_n + \mathbf{c}^{(m+1, h)}_n\big)+ r_n \mathbf{\Delta M}^{{(m+1, h)}}_{n+1}, \label{EQNmcooperative}\\
			&\ \mathbf{c}^{(m+1, h)}_n=\sum_{\ell=1}^{k_0} \mathbf{B}_{(m+1, h)}^{(0, \ell)}\mathbf{Z}^{(\ell)}_n +\sum_{t=1}^m \sum_{s=1}^{k_t} \mathbf{B}_{(m+1, h)}^{(t, s)} \mathbf{U}^{(t, s)}_n, \nonumber
		\end{align} 
		with $\mathbf{K}_{(m+1, h)}=\mathbf{A}_{m+1, h}-\mathbf{I}_{N_{m+1,h}}$, for $m\geq 0$. \\
		
		We then suppose that classes can be cooperative or competitive and using the hierarchical representation of $G$, thanks to equations \eqref{EQNz}, \eqref{EQNm} and \eqref{EQNCooperativeZ}, \eqref{EQNmcooperative}, we can easily prove the following result:

		\begin{theorem}
			Assume that in the hierarchical representation  of the graph $G=(V,E)$ associated to the system, agents in a given class have all the same attitude (i.e., they are all cooperative or all competitive). Let $\mathbf{\bar{Z}}_n\in \R^N$ the vector of agents' states at step $n$. Then $\mathbf{\bar{Z}}_n\longrightarrow \mathbf{\bar{Z}}_\infty$ a.s., where $$\mathbf{\bar{Z}}_\infty=\big( (\mathbf{Z}^{(\ell)}_\infty)_{1\leq\ell\leq k_0}; (\mathbf{U}^{(1, h)}_\infty)_{1\leq h\leq k_1}; \ldots ;(\mathbf{U}^{(M, h)}_\infty)_{1\leq h\leq k_M}\big).$$ In the above formula, for $\ell\in \{1,\ldots, k_0\}$, if the class $R_\ell$ is cooperative, $\mathbf{Z}^{(\ell)}_\infty= (Z_\infty^{(\ell)},\ldots,$  $Z^{(\ell)}_\infty)\in \R^{N_{0,\ell}}$, where $Z^{(\ell)}_\infty$ is a random variable with $P(Z^{(\ell)}=x)<1,\  \forall x\in [0,1]$; if the class $R_\ell$ is competitive and the subgraph $G_\ell=(R_{\ell},\  E\cap R_\ell^2)$ of $G$ associated to $(\alpha_{i,j})_{i,j\in R_\ell}$ is bypartite,
			$\mathbf{Z}^{\ell}_\infty=(Z^{(\ell)}_\infty,\ldots , Z^{(\ell)}_\infty,\  1-Z^{(\ell)}_\infty,\ldots, 1-Z^{(\ell)}_\infty)\in \R^{N_{0,\ell}}$, 
			where $Z^{(\ell)}_\infty$ is a random variable with $P(Z^{(\ell)}=x)<1,\  \forall x\in [0,1]$; 
			if the class $R_\ell$ is competitive and the graph $G_\ell$ is not bypartite, 
			$\mathbf{Z}^{(\ell)}_\infty=(\frac{1}{2},\ldots, \frac{1}{2})\in \R^{N_{0,\ell}}$;  for $m\in \{1,\ldots M\}$ and $h\in \{1,\ldots, k_m\}$ the random variable $\mathbf{U}^{(m,h)}_\infty$ depends on the variables of $\mathbf{Z}^{(\ell)}_\infty,\ \ell=1,\ldots, k_0$, according to the following recursive formula:   
			\begin{eqnarray*}
				& &\mathbf{U}^{(m, h)}_{\infty}=-\mathbf{K}_{(m, h)}^{-1} \Big(\mathbf{c}^{(m, h)}+ \sum_{\ell=1}^{k_0} \widetilde{\mathbf{B}}_{(m, h)}^{(0, \ell)}\mathbf{Z}^{(\ell)}_\infty +\sum_{t=1}^{m-1} \sum_{s=1}^{k_t} \widetilde{\mathbf{B}}_{(m, h)}^{(t, s)} \mathbf{U}^{(t, s)}_\infty\Big),
			\end{eqnarray*}
			where, for $T_{m,h}$ competitive or cooperative, we have respectively, for $ i\in\{0,\ldots m-1\}$ and $j\in \{1,\ldots, k_i\}$, 
			\begin{equation*}
				\begin{array}{lllccc}
					&\mathbf{K}_{(m,h)}= 2\mathrm{diag}(\mathbf{A}_{m,h})+\mathrm{I}_{N_{m,h}}-\mathbf{A}_{m,h}, &\mathbf{c}^{(m, h)}=(1-\alpha_{ii})_{i\in T_{m, h}}, & \widetilde{\mathbf{B}}_{(m,h)}^{(i,j)}=-\mathbf{B}_{(m,h)}^{(i,j)},\\  
					& \\
					&\mathbf{K}_{(m,h)}=\mathbf{A}_{m,h}-\mathrm{I}_{N_{m,h}},  &\mathbf{c}^{(m, h)}=\mathbf{0}, &\widetilde{\mathbf{B}}_{(m,h)}^{(i,j)}=\mathbf{B}_{(m,h)}^{(i,j)}.\\
				\end{array}
			\end{equation*}
		\end{theorem}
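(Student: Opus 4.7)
The plan is to proceed by induction on the level $m$ in the hierarchical decomposition of $G$. At the base level $m=0$, each class $R_\ell$ forms an autonomous subsystem whose associated subgraph is irreducible by construction. If $R_\ell$ is cooperative, Theorem \ref{convergenceCoop} applies directly and yields synchronization to a random limit $Z^{(\ell)}_\infty$ with $P(Z^{(\ell)}_\infty=x)<1$ for every $x\in[0,1]$. If $R_\ell$ is competitive, Theorem \ref{convergence} gives the dichotomy: the bipartite case produces a limit of the announced form with alternation between $Z^{(\ell)}_\infty$ and $1-Z^{(\ell)}_\infty$ across the two parts, while the non-bipartite case gives the deterministic limit $\frac{1}{2}\mathbf{1}$.

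For the inductive step, fix $m\geq 1$ and assume convergence has been established for every class at levels strictly smaller than $m$. Fix a class $T_{m,h}$ with state vector $\mathbf{U}^{(m,h)}_n$. Equation \eqref{EQNm} (competitive case) or \eqref{EQNmcooperative} (cooperative case) fits the framework of Section \ref{sec:competitiveforce}: it can be written as \eqref{equation5} with forcing term
\[
\mathbf{c}^{(m,h)}_n = \mathbf{c}^{(m,h)} + \sum_{\ell=1}^{k_0} \widetilde{\mathbf{B}}_{(m,h)}^{(0,\ell)}\mathbf{Z}^{(\ell)}_n + \sum_{t=1}^{m-1}\sum_{s=1}^{k_t} \widetilde{\mathbf{B}}_{(m,h)}^{(t,s)}\mathbf{U}^{(t,s)}_n,
\]
where the sign convention distinguishes competitive from cooperative classes. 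By the inductive hypothesis, each term appearing in this sum converges almost surely, so $\mathbf{c}^{(m,h)}_n$ converges almost surely to the random vector $\mathbf{c}^{(m,h)}_\infty$ obtained by replacing $\mathbf{Z}^{(\ell)}_n$ and $\mathbf{U}^{(t,s)}_n$ with their respective limits.

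To apply Theorem \ref{forcing} in the competitive case, or its cooperative counterpart from Remark \ref{THMQcooperative}, I must verify that $\mathbf{A}_{m,h}$ is substochastic with at least one strictly substochastic row. This is precisely what the definition of a class of level $m\geq 1$ guarantees: there exists at least one $i\in T_{m,h}$ with an edge to a class of lower level, so $\sum_{j\in T_{m,h}}\alpha_{ij}<1$. Hence Theorem \ref{gervsgorin}(ii) ensures that $\mathbf{K}_{(m,h)}$ is invertible, and the limiting identity
\[
\mathbf{U}^{(m,h)}_\infty = -\mathbf{K}_{(m,h)}^{-1}\mathbf{c}^{(m,h)}_\infty
\]
agrees with the formula stated in the theorem after substituting the correct expressions for $\mathbf{K}_{(m,h)}$, $\mathbf{c}^{(m,h)}$ and $\widetilde{\mathbf{B}}$ according to whether $T_{m,h}$ is competitive or cooperative.

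The main obstacle I expect is justifying the use of the forcing result when $\mathbf{c}^{(m,h)}_\infty$ is itself a nontrivial random variable depending on the entire history of lower-level dynamics, rather than an almost sure deterministic constant. The cleanest way around this is to observe that Theorem \ref{forcing} is proved via Theorem \ref{benaim}, whose conclusion is pathwise: once one conditions on a realization for which $\mathbf{c}^{(m,h)}_n\to\mathbf{c}^{(m,h)}_\infty$, the limit ODE $\dot{\mathbf{x}}=\mathbf{K}_{(m,h)}\mathbf{x}+\mathbf{c}^{(m,h)}_\infty$ has a unique globally stable equilibrium, and the perturbation $\mathbf{b}_n=\mathbf{c}^{(m,h)}_n-\mathbf{c}^{(m,h)}_\infty$ vanishes, so that Remark 4.5 of \cite{Benaim99} applies on that path. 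Iterating the induction up through $m=M$ assembles the full limit vector $\mathbf{\bar{Z}}_\infty$ described in the statement.
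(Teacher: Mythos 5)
Your proposal is correct and follows essentially the same route as the paper: induction on the level, with level-$0$ classes handled by Theorems \ref{convergence} and \ref{convergenceCoop}, and higher levels treated as systems with a forcing input via Theorem \ref{forcing} and Remark \ref{THMQcooperative}, using the strict substochasticity of $\mathbf{A}_{m,h}$ coming from the edge to a lower level. The ``obstacle'' you raise about the random limit of the forcing term is already covered by Theorem \ref{forcing} as stated (it allows a random limit vector $\mathbf{q}$), so your pathwise argument is a harmless extra justification rather than a deviation.
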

		\begin{proof}
			We proceed by induction on the level $m$. We first observe that the asymptotic behavior of classes in the same level can be studied separately, since they interact only with classes of lower levels. 
			Any class of level $0$ satisfies equation \eqref{EQNz} if it is competitive, and equation \eqref{EQNCooperativeZ} if it is cooperative, so its asymptotic behavior can be derived by the results in the first section. By Theorems \ref{convergence} and \ref{convergenceCoop}, $\mathbf{Z}^{(\ell)}_n\longrightarrow \mathbf{Z}^{(\ell)}_\infty$, for every $\ell=1,\ldots k_0$. For $m=1$ and $h=1,\ldots, k_1$, the sequence $(\mathbf{U}^{(1,h)}_n)_n$ satisfies equation \eqref{EQNm1} if the corresponding class $T_{1,h}$ is competitive, and equation \eqref{EQNmcooperative1} if the class $T_{1,h}$ is cooperative. In both cases we have $\mathbf{c}^{(1,h)}_n\longrightarrow \mathbf{Q}^{(1,h)}$ a.s. with $\mathbf{Q}^{(1,h)}=(1-\alpha_{ii})_{i\in T_{1,h}}-\sum_{\ell=1}^{k_0}\mathbf{B}_{(1, h)}^{(0, \ell)}\mathbf{Z}^{(\ell)}_\infty$ if $T_{1, h}$ is competitive, and  $\mathbf{Q}^{(1,h)}=\sum_{\ell=1}^{k_0} \mathbf{B}_{(1, h)}^{(0, \ell)}\mathbf{Z}^{(\ell)}_\infty$ if $T_{1, h}$ is coperative. 
			Now, equations \eqref{EQNm1} and \eqref{EQNmcooperative1} have the same form as the ones considered in Section \ref{sec:irred}, and satisfy the same hypotheses. Therefore, by Theorem \ref{forcing} and Remark \ref{THMQcooperative}, we get that, for every $h=1,\ldots k_1$,  $\mathbf{U}^{(1,h)}_n\longrightarrow \mathbf{U}^{(1,h)}_\infty$, where $\mathbf{U}^{(1,h)}_\infty=-\mathbf{K}_{(1,h)}^{-1}\mathbf{Q}^{(1, h)}$.\\
			By induction, for $m>1$, we can suppose that  $\mathbf{Z}^{(\ell)}_n\longrightarrow \mathbf{Z}^{(\ell)}_\infty$ a.s. , for every $\ell=1,\ldots, k_0$, and that $\mathbf{U}^{(t,s)}_n\longrightarrow \mathbf{U}^{(t,s)}_\infty$ a.s., for every $1\leq t\leq m$, $\ 1\leq s\leq k_t$. Then, arguing as above, using equations \eqref{EQNm}, \eqref{EQNmcooperative} and the results of Section \ref{sec:irred}, we get the conclusion.  
		\end{proof}
		\begin{remark}
			The above result clearly holds also if among the classes of level 0 we include \emph{stubborn agents}, i.e. agents with a (possibly random) state $q$ that does not change in time. 
		\end{remark}

		\bibliographystyle{abbrv}
		\bibliography{references}

		
	\end{document}